\documentclass[11pt]{amsart}

\theoremstyle{plain}
\newtheorem{thm}{Theorem}[section]
\newtheorem{theorem}[thm]{Theorem}

\newtheorem{lemma}[thm]{Lemma}
\newtheorem{corollary}[thm]{Corollary}
\newtheorem{proposition}[thm]{Proposition}
\theoremstyle{definition}
\newtheorem{remark}[thm]{Remark}

\newtheorem{definition}[thm]{Definition}
\newtheorem{claim}[thm]{Claim}

\numberwithin{equation}{section}

\newcommand{\sO}{{\mathcal O}}


\newcommand{\C}{{\mathbb C}}

\newcommand{\BP}{{\mathbb P}}
\newcommand{\Q}{{\mathbb Q}}
\newcommand{\R}{{\mathbb R}}

\newcommand{\Z}{{\mathbb Z}}



\title [Primitive automorphisms]{A criterion for the primitivity of a birational automorphism of a Calabi-Yau manifold and an application}

\author{Keiji Oguiso}

\address{Mathematical Sciences, the University of Tokyo, Meguro Komaba 3-8-1, Tokyo, Japan and Korea Institute for Advanced Study, Hoegiro 87, Seoul, 
133-722, Korea}
\email{oguiso@ms.u-tokyo.ac.jp}

\thanks{The author is supported by JSPS Grant-in-Aid (S) No 25220701, JSPS Grant-in-Aid (S) 15H05738, JSPS Grant-in-Aid (A) 16H02141, JSPS Grant-in-Aid (B) 15H03611, and by KIAS Scholar Program.}


\begin{document}

\maketitle

\begin{abstract} We shall give a sufficient condition on the primitivity of a birational automorphism of a Calabi-Yau manifold in purely algebro-geometric terms. As an application, we shall give an explicit construction of Calabi-Yau manifolds of Picard number $2$ of any dimension $\ge 3$, with primitive birational automorphisms of first dynamical degree $>1$.
\end{abstract}

\section{Introduction}

Throughout this note, we work in the category of projective varieties defined over $\C$. This note is a continuation of \cite{Og14}, \cite{Og16-2} and is much inspired by recent works of Bianco \cite{Bi16} in technique and also works of Amerik-Verbitsky \cite{AV16} and Ouchi \cite{Ou16} in spirit for dynamical studies of automorphisms of hyperk\"ahler manifolds. Our main interest is the existence of primitive birational automorphisms of the first dynamical degree $>1$ on Calabi-Yau manifolds in any dimension $\ge 3$, especially of the smallest possible Picard number $2$, as done by \cite{AV16}, \cite{Ou16} for hyperk\"ahler manifolds. Our main results are Theorems \ref{thm1}, \ref{thm2} below.

The following notion of primitivity is introduced by De-Qi Zhang \cite{Zh09} and plays the central role in this note:

\begin{definition}\label{def1} Let $X$ be a projective variety and $f \in {\rm Bir}\, (X)$. We call a rational dominant map $\pi : X \dasharrow B$ to a projective variety $B$ with connected fibers a {\it rational fibration}. The map $\pi$ is said to be {\it non-trivial} if $0 < \dim\, B < \dim\, X$. A rational fibration $\pi : X \dasharrow B$ is said to be $f$-{\it equivariant} if there is $f_B \in {\rm Bir}\, (B)$ such that $\pi \circ f = f_B \circ \pi$. We say that $f$ is {\it primitive} if there is no non-trivial $f$-equivariant rational fibration $\pi : X \dasharrow B$. 
\end{definition}

In this definition, if we are interested in the birational equivalence class of $(X, f)$ rather than the variety $X$ itself, then we may assume without loss of generality that $X$ and $B$ are smooth and $\pi$ is a morphism by a Hironaka resolution (\cite{Hi64}). Primitivity of a birational automorphism in the category of projective varieties is an analogue of irreducibility of a linear selfmap in the category of finite dimensional linear spaces.  

The notion of dynamical degrees is introduced by Dinh-Sibony \cite{DS05} as a refinement of the notion of topological entropy in complex dynamics. Though the notion of primitivity is purely algebro-geometric, there observed some close relations between primitivity and dynamical degrees (see eg., \cite{Og16-1} for surfaces and \cite{Bi16} for hyperk\"ahler manifolds). Here we briefly recall the definition of dynamical degrees, following \cite{Tr15}.   

For a rational map $g : X \dasharrow Y$ from a smooth projective variety $X$ to a smooth projective variety $Y$, we define $g^* : N^p(Y) \to N^p(X)$ by 
$$g^* = (p_1)_* \circ p_2^*\,\, .$$ 
Here $N^p(X)$ is a free $\Z$-module generated by the numerical equivalence classes of $p$-cocycles, $p_1 : Z \to X$ is a Hironaka resolution of the indeterminacy of $g$, $p_2 : Z \to Y$ is the induced morphism, $p_2^*$ is the pullback as a cocycle and $(p_1)_*$ is the pushforward as a cycle (\cite[Chap19, Example 19.1.6]{Fu84}). In this definition, smoothness of $Y$ is not needed but smoothness of $Z$ and $X$ are needed in order to identify cycles and 
cocycles over $\Z$. Then, the $p$-th {\it dynamical degree} of $f \in {\rm Bir}\, (X)$ is defined by 
\begin{equation}\label{eq11}
d_p(f) := \lim_{k \to \infty} (((f^k)^*H^p).H^{n-p})_X^{\frac{1}{k}}\,\, .
\end{equation}
Here $n := \dim\, X$ and $H$ is any ample divisor on $X$. By Dinh-Sibony \cite{DS05} (see also \cite{Tr15}), $d_p(f)$ is well-defined, independent of the chioce $H$, and birational invariant in the sense that 
$$d_p(\varphi^{-1} \circ f \circ \varphi) = d_p(f)$$ 
for any birational map $\varphi : X' \dasharrow X$ from a smooth projective variety $X'$. Moreover, if $f \in {\rm Aut}\, (X)$, then $d_p(f)$ coincides with the spectral radius $r_p(f)$ of the linear selfmap $f^*|N^p(X)$ and the topological entropy $h_{\rm top}(f)$ is computed as:
$$h_{\rm top}(f) = {\rm Max}_{p} \log d_p(f) = {\rm Max}_{p} \log r_p(f)\,\, .$$
Throughout this note, we call $X$ a {\it Calabi-Yau manifold}, if $X$ is smooth, simply-connected, $H^0(\Omega_X^j) = 0$ for $0 < j < n := \dim\, X$ and $H^0(\Omega_X^{n}) = \C\omega_X$ for a nowhere vanishing regular $n$-form $\omega_X$. We call $X$ a {\it minimal Calabi-Yau variety} if $X$ has at most $\Q$-factorial terminal singularities, $h^1(\sO_X) = 0$ and $\sO_X(K_X) \simeq \sO_X$. 

Our aim is to prove the following two theorems (Theorems \ref{thm1}, \ref{thm2}):

\begin{theorem}\label{thm1} Let $X$ be a minimal Calabi-Yau variety of dimension $n \ge 3$ of Picard number $\rho(X) \ge 2$ and $f \in {\rm Bir}\, (X)$ such that 
\begin{enumerate}
\item For any movable effective divisor $D$, there are a minimal Calabi-Yau variety $X'$ and a birational map $g : X' \dasharrow X$, both allowing to depend on $D$, such that $D' = g^*D$ is semi-ample on $X'$; and 
\item The action $f^*|N^1(X)_{\Q}$ is irreducible over $\Q$.
\end{enumerate}
Then $f$ is primitive. If in addition that $X$ is smooth, then $d_1(f) > 1$ as well. 
\end{theorem}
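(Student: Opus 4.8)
The plan is to prove the contrapositive for primitivity: assume $f$ is \emph{not} primitive and derive a contradiction with hypothesis (2). So suppose there is a non-trivial $f$-equivariant rational fibration $\pi : X \dasharrow B$ with $f_B \in {\rm Bir}\,(B)$ satisfying $\pi \circ f = f_B \circ \pi$. After a Hironaka resolution (as remarked after Definition \ref{def1}) I may assume $\pi$ is a genuine morphism of smooth projective varieties with connected fibers and $0 < \dim B < \dim X =: n$. The goal is to manufacture, from such a $\pi$, a proper nonzero $f^*$-invariant $\Q$-subspace of $N^1(X)_{\Q}$, contradicting irreducibility.

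\medskip

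First I would pull back an ample class on $B$. Let $H_B$ be an ample divisor on $B$ and set $D := \pi^* H_B$, a nonzero effective divisor on $X$ whose numerical class $[D] \in N^1(X)_{\Q}$ is nonzero since $\dim B > 0$. The subspace I want is $V := \pi^* N^1(B)_{\Q} \subseteq N^1(X)_{\Q}$, the image of pullback along $\pi$. Because $\dim B < n$ and fibers of $\pi$ are positive-dimensional, $V$ is a proper subspace; because $\dim B > 0$, $V \neq 0$. The equivariance relation $\pi \circ f = f_B \circ \pi$ should force $f^*$ to preserve $V$: formally, $f^* \pi^* = \pi^* f_B^*$ on numerical classes, so $f^*(V) = \pi^* f_B^* N^1(B)_{\Q} \subseteq \pi^* N^1(B)_{\Q} = V$. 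This would immediately contradict hypothesis (2), proving $f$ primitive.

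\medskip

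The step I expect to be the main obstacle is making the identity $f^* \circ \pi^* = \pi^* \circ f_B^*$ rigorous at the level of $N^1(\,\cdot\,)_{\Q}$ for \emph{birational} (not regular) maps, and controlling that $V$ is genuinely proper. Pullback of numerical classes is only functorial up to the subtleties recorded in the definition of $g^*$ via Hironaka resolutions, and $f$, $f_B$ are merely birational, so I must resolve the indeterminacy of the whole commutative square simultaneously and check that the pushforward/pullback operations compose correctly modulo numerical equivalence; this is exactly the kind of technical bookkeeping that the cited works of Bianco \cite{Bi16} and Truong \cite{Tr15} are designed to handle, and I would invoke their functoriality for $N^1$. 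This is where hypothesis (1) plausibly enters: semi-ampleness of movable divisors on an auxiliary minimal Calabi-Yau model $X'$ lets one replace the birational fibration $\pi$ by an honest morphism whose pullback behaves well numerically, so that $V$ is a bona fide $f^*$-stable rational subspace of the correct, proper dimension. The properness of $V$ (i.e. $V \neq N^1(X)_{\Q}$) follows because a class pulled back from $B$ restricts trivially to the general fiber $F$ of $\pi$, whereas an ample class on $X$ does not, so $[H_X] \notin V$.

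\medskip

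For the final assertion that $d_1(f) > 1$ when $X$ is smooth, the plan is to combine primitivity with the spectral theory of $f^*|N^1(X)$. Since $X$ is smooth here and $\rho(X) \geq 2$, hypothesis (2) gives that $f^*|N^1(X)_{\Q}$ acts $\Q$-irreducibly on a space of dimension $\geq 2$. If $d_1(f) = 1$ then the spectral radius $r_1(f)$ of $f^*|N^1(X)$ equals $1$, which would force all eigenvalues of $f^*$ to be algebraic integers of absolute value $\leq 1$ whose product (the determinant) has absolute value $1$; by Kronecker's theorem such eigenvalues are roots of unity, so some power $(f^*)^m$ is unipotent, and $\Q$-irreducibility on a space of dimension $\geq 2$ together with the preservation of the ample cone (a Perron--Frobenius type constraint, cf. \cite{Og16-1}) would be incompatible unless $\rho(X) = 1$. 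I would therefore argue that irreducibility of dimension $\geq 2$ plus the positivity constraint coming from the nef cone rules out $r_1(f) = 1$, whence $d_1(f) = r_1(f) > 1$. The delicate point is justifying the Perron--Frobenius/positivity input in the numerical setting, for which I would again lean on the cited hyperk\"ahler and surface dynamics literature.
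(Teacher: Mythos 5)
The primitivity half of your proposal has a genuine gap, and it sits exactly at the step you yourself flagged as ``the main obstacle'': the identity $f^* \circ \pi^* = \pi^* \circ f_B^*$ does not hold for a general $f$-equivariant rational fibration, and no functoriality statement in \cite{Bi16} or \cite{Tr15} supplies it. Pullback along rational maps, defined via resolutions as $(p_1)_* \circ p_2^*$, is simply not functorial (this is the familiar failure of algebraic stability: for the standard Cremona involution $c$ of $\BP^2$ one has $(c \circ c)^* = \mathrm{id} \neq c^* \circ c^*$). Two further problems compound this. First, you begin by resolving $X$ so that $\pi$ becomes a morphism; but hypothesis (2) is a property of the specific model $X$, not of its birational class, and there is no reason it is inherited by a resolution $\tilde X$ (moreover, $f$ lifted to $\tilde X$ is no longer an isomorphism in codimension one, which is precisely what makes $f^*$ well-defined and functorial in the first place). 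Second, if instead you keep $X$ and define $\pi^* = \nu_* \circ \tilde\pi^*$ via a resolution $\nu : \tilde X \to X$ of the indeterminacy, then for a merely birational $f_B$ the class $f^*(\pi^*\beta)$ differs from the image of the strict-transform class of $\beta$ by classes of divisors of $\tilde X$ lying over codimension-$\ge 2$ centers of $B$, and such classes need not lie in $\pi^* N^1(B)_{\Q}$; so stability of your subspace $V$ genuinely fails to follow. Your hope that hypothesis (1) repairs this for an \emph{arbitrary} fibration is also too optimistic: applying (1) to the movable divisor $L = \nu_*\tilde\pi^* H$ gives a model on which $L$ is semi-ample, but the Iitaka fibration of $L$ recovers $B$ itself --- rather than something of larger dimension, possibly birational to $X$ when $L$ is big --- only when the general fiber of $\pi$ has Kodaira dimension zero. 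This is exactly the paper's Claim 2.2, whose proof uses $\kappa(\tilde X_b) = 0$ in an essential way.

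Accordingly, the paper can run your ``stable subspace'' endgame only for equivariant fibrations whose general fibers have $\kappa = 0$ (its Lemma 2.1: there hypothesis (1), the Iitaka fibration, and a ${\rm Proj}$ argument upgrading $f_B$ to an honest automorphism of $B$ make both composition identities $(\pi\circ f)^* = f^*\circ\pi^*$ and $(f_B\circ\pi)^* = \pi^*\circ f_B^*$ legitimate). The general case needs two ingredients absent from your proposal: (i) the theorem of Amerik--Campana \cite{AC13}, which combined with irreducibility shows that a very general $f$-orbit is Zariski dense (Lemma 2.2); and (ii) Bianco's proposition that an $f$-equivariant fibration with fibers of general type forbids dense orbits (Proposition 2.3, proved via relative canonical models and Isom-schemes). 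Given an arbitrary equivariant fibration $X \dasharrow B$, the paper passes to the relative Iitaka fibration $X \dasharrow K$ over $B$: if $\dim K = \dim X$ the fibers over $B$ are of general type and (i)+(ii) give a contradiction; otherwise $0 < \dim K < \dim X$ and the fibers of $X \dasharrow K$ have $\kappa = 0$, so Lemma 2.1 applies. Without this trichotomy your argument cannot get started. (Your sketch of $d_1(f) > 1$ is closer to repairable, but note that a birational $f$ preserves the movable cone, \emph{not} the ample cone as you assert; the paper applies Birkhoff--Perron--Frobenius to $\overline{\rm Mov}(X)$, gets $d_1(f) \ge 1$ from $\det f^* = \pm 1$, and excludes $d_1(f) = 1$ because the eigenvalue-one eigenspace would then contain a nonzero rational vector, giving an $f^*$-stable proper subspace.)
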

See Remark \ref{rem1} for the terminologies and precise definitions in conditions (1) and (2). 

The condition (1) in Theorem \ref{thm1} is automatical if the log minimal model program works in dimension $n$ (then one makes $D'$ nef) and log abundance theorem also holds in dimension $n$ (then $D'$ is semi-ample). In particular, as the log minimal model program works in dimension $3$ (\cite{Sh03}) and log abundance theorem also holds in dimension $3$ (\cite{Ka92}, \cite{KMM94}), we obtain:

\begin{corollary}\label{cor1} Let $X$ be a minimal Calabi-Yau variety of dimension $3$ of Picard number $\rho(X) \ge 2$ and $f \in {\rm Bir}\, (X)$ such that the action $f^*|N^1(X)_{\Q}$ is irreducible over $\Q$. Then $f$ is primitive. 
\end{corollary}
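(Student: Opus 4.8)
The plan is to derive Corollary \ref{cor1} from Theorem \ref{thm1}: hypothesis (2) is assumed outright, so everything reduces to checking hypothesis (1) for a minimal Calabi-Yau threefold $X$. Concretely, I must show that for any movable effective divisor $D$ on $X$ there exist a minimal Calabi-Yau variety $X'$ and a birational map $g : X' \dasharrow X$ with $g^*D$ semi-ample.

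First I would pass to an auxiliary klt pair. Since $X$ is $\Q$-factorial, $D$ is $\Q$-Cartier, and because $X$ has terminal singularities the pair $(X, \epsilon D)$ is klt for every sufficiently small rational $\epsilon > 0$. As $\sO_X(K_X) \simeq \sO_X$ we have $K_X + \epsilon D \sim_{\Q} \epsilon D$, so the $(K_X + \epsilon D)$-MMP coincides with the $D$-MMP. In dimension $3$ this program exists and terminates by \cite{Sh03}, producing a birational map $\phi : X \dasharrow X'$ to a log minimal model on which $K_{X'} + \epsilon D'$ is nef, where $D' := \phi_* D$; set $g := \phi^{-1}$.

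The key point is that $\phi$ involves only flips and no divisorial contraction, which keeps $X'$ in the class of minimal Calabi-Yau varieties and gives $g^*D = D'$. Each intermediate model is again a $\Q$-factorial terminal variety with trivial canonical class carrying a movable transform of $D$, so it suffices to rule out a divisorial contraction for such data. A divisorial contraction would be $(K + \epsilon D)$-negative, hence (as $K \equiv 0$) $D$-negative, on the curves $C$ it contracts; its exceptional divisor $E$ is covered by such $C$, all with $D \cdot C < 0$. But a divisor swept out by curves of negative $D$-degree must lie in the stable base locus of the movable divisor $D$, which has codimension $\ge 2$ -- a contradiction. Thus every step is a flip, and since $K \equiv 0$ it is a flop, i.e. an isomorphism in codimension $1$. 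Flops preserve $\Q$-factoriality, terminality, $h^1(\sO) = 0$ and $\sO(K) \simeq \sO$, so $X'$ is again a minimal Calabi-Yau variety with $K_{X'} \equiv 0$; being isomorphisms in codimension $1$, $\phi$ and $g$ make $g^*D$ in the sense of the introduction equal to the strict transform $D'$.

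Finally I would invoke log abundance. Since $\sO_{X'}(K_{X'}) \simeq \sO_{X'}$, nefness of $K_{X'} + \epsilon D' \sim_{\Q} \epsilon D'$ is exactly nefness of $D'$, and the log abundance theorem for threefolds (\cite{Ka92}, \cite{KMM94}) upgrades the nef $\Q$-divisor $K_{X'} + \epsilon D'$ on the klt pair $(X', \epsilon D')$ to a semi-ample one; dividing by $\epsilon$ shows $D'$ is semi-ample. This verifies hypothesis (1), and Theorem \ref{thm1} then yields that $f$ is primitive. I expect the one genuinely delicate step to be the exclusion of divisorial contractions from the movability of $D$ -- the existence and termination of the threefold MMP and log abundance being quoted as black boxes -- since it is exactly this that keeps $X'$ a minimal Calabi-Yau variety and makes $g^*D$ equal to the honest transform $D'$.
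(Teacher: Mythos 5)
Your proposal is correct and follows the paper's own route: the paper deduces Corollary \ref{cor1} from Theorem \ref{thm1} by observing that condition (1) holds automatically in dimension $3$, citing Shokurov's log MMP \cite{Sh03} to make the transform of $D$ nef and log abundance \cite{Ka92}, \cite{KMM94} to upgrade nef to semi-ample, exactly as you do. The details you supply --- ruling out divisorial contractions via movability of $D$, so that all MMP steps are flops, $X'$ stays a minimal Calabi-Yau variety, and $g^*D$ agrees with the strict transform --- are precisely what the paper leaves implicit, and they check out.
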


Theorem \ref{thm1} and Corollary \ref{cor1} may have their own interest, suggest some relation between primitivity of birational automorphisms and primitivity ($=$ irreducibility) of its geometrically meaningful linear representation. We prove Theorem \ref{thm1} in Section 2. 

\begin{theorem}\label{thm2} For each $n \ge 3$, there is an $n$-dimensional Calabi-Yau manifold $M$ of Picard number $\rho(M) = 2$ with a primitive birational automorphism $f \in {\rm Bir}\, (M)$ of the first dynamical degree $d_1(f) > 1$. 
\end{theorem}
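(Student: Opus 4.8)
The plan is to exhibit an explicit pair $(M,f)$ satisfying the hypotheses of Theorem \ref{thm1} and then invoke that theorem directly. Concretely, I would take $M$ to be a general complete intersection in $\PP^n \times \PP^n$ of $n$ hypersurfaces of multidegrees $(2,1)$, $(1,2)$ and $(1,1)$, the last repeated $n-2$ times. Since the multidegrees sum to $2+1+(n-2) = n+1$ in each factor, adjunction gives $\sO_M(K_M) \simeq \sO_M$, so $\dim M = n$ and $M$ is Calabi--Yau; for $n \ge 3$ the Lefschetz and Grothendieck--Lefschetz theorems show that a general such $M$ is smooth, simply connected, satisfies $H^0(\Omega_M^j) = 0$ for $0 < j < n$, and has $\mathrm{Pic}\,(M) \simeq \mathrm{Pic}\,(\PP^n \times \PP^n) = \Z h_1 \oplus \Z h_2$, where $h_i = p_i^* \sO_{\PP^n}(1)$ are the two hyperplane pull-backs. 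Thus $M$ is a smooth minimal Calabi--Yau variety with $\rho(M) = 2$.

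The purpose of these multidegrees is that each projection $p_i : M \to \PP^n$ is generically finite of degree $2$ (the product of the multidegrees in the complementary factor equals $2$), so each carries a birational deck involution $\iota_i \in \mathrm{Bir}\,(M)$. I set $f := \iota_1 \circ \iota_2$. Because $M$ is a minimal Calabi--Yau variety, every birational self-map is an isomorphism in codimension one, so $\iota_1, \iota_2$ are pseudo-automorphisms and act on $N^1(M)_\Q = \Q h_1 \oplus \Q h_2$. Each $\iota_i$ fixes $h_i$ and, by the projection formula, sends $h_j \mapsto c\,h_i - h_j$ ($j \ne i$) for the integer $c = (h_1^{n-1} h_2)_M$, which is symmetric in the two factors since $[M] = (2h_1+h_2)(h_1+2h_2)(h_1+h_2)^{n-2}$ is. Hence each $\iota_i^*$ is a reflection and $f^* = \iota_1^* \iota_2^*$ is an integral matrix of determinant $1$ and trace $c^2 - 2$. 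A direct intersection computation on $\PP^n \times \PP^n$ gives $c > 2$ (for instance $c = 7$, trace $47$, when $n = 3$), so the characteristic polynomial $t^2 - (c^2-2)t + 1$ has no rational root: the action $f^*|N^1(M)_\Q$ is irreducible over $\Q$. This verifies hypothesis (2) of Theorem \ref{thm1} and forces eigenvalues $\lambda, \lambda^{-1}$ with $\lambda > 1$, the eventual value of $d_1(f)$.

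It remains to verify hypothesis (1), which is the real heart of the matter in dimension $\ge 4$, where the minimal model program and abundance are unavailable. The decisive feature of the construction is that on $M$ the nef cone equals the semi-ample cone: the classes $h_1, h_2$ are semi-ample, being pull-backs of $\sO_{\PP^n}(1)$ under the morphisms $p_1, p_2$; and since neither $p_i$ is an isomorphism, $h_1, h_2$ are not ample, hence are exactly the two extremal rays, so $\mathrm{Nef}\,(M) = \Z_{\ge 0} h_1 + \Z_{\ge 0} h_2$ and every integral nef class $a h_1 + b h_2$, being the restriction of $\sO_{\PP^n \times \PP^n}(a,b)$, is semi-ample. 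The plan is then to show that the movable cone $\overline{\mathrm{Mov}}\,(M)$ is tiled by the translates of $\mathrm{Nef}\,(M)$ under the infinite dihedral group $\langle \iota_1^*, \iota_2^* \rangle$, all of whose chambers are nef cones of minimal Calabi--Yau varieties isomorphic to $M$ through the corresponding flops. Granting this, for any movable effective divisor $D$ one chooses $g \in \langle \iota_1, \iota_2 \rangle$ carrying $[D]$ into $\mathrm{Nef}\,(M)$; then $g^* D$ is nef, hence semi-ample, on $X' \cong M$, which is exactly assertion (1). Theorem \ref{thm1} then yields that $f$ is primitive and, as $M$ is smooth, that $d_1(f) > 1$.

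The step I expect to be hardest is this last one: proving that $\overline{\mathrm{Mov}}\,(M)$ is precisely the union of the dihedral translates of the nef cone and that every \emph{effective} movable class actually lands in one of them. Without MMP and abundance in high dimension this cannot be quoted and must be extracted from the explicit geometry — showing that the two contractions $p_1, p_2$ are small, so that the flopped models are again isomorphic to $M$, and that no movable class escapes the dihedral cone bounded by the two irrational (Salem) eigen-rays of $f^*$. This is exactly the content that condition (1) of Theorem \ref{thm1} is designed to isolate, and here it reduces to a concrete, if delicate, analysis of the positive-dimensional fibres of $p_1$ and $p_2$.
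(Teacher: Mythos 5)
Your construction is a close cousin of the paper's (which uses $n-1$ hypersurfaces of bidegree $(1,1)$ and one of bidegree $(2,2)$, with the same $f=\tau_1\circ\tau_2$ built from the two degree-$2$ projections), and your verification of hypothesis (2) of Theorem \ref{thm1} is sound: the intersection-theoretic computation of $c$, the trace $c^2-2$, and the irrationality of the eigenvalues all check out. The genuine gap is exactly where you flag it: hypothesis (1) is never proved, only announced (``Granting this\dots''), and your reason for deferring it --- that ``without MMP and abundance in high dimension this cannot be quoted'' --- is mistaken, and correcting that mistake is precisely the paper's key idea. One does not need termination of the MMP for arbitrary pairs, nor abundance: one quotes \cite[Theorem 1.2]{BCHM10}, valid in \emph{every} dimension for klt pairs of log general type. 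Concretely: a movable divisor $D$ has rational class, while the boundary rays of $\overline{{\rm Mov}}\,(M)$ are the irrational eigenrays of $f^*$, so $[D]$ lies in the interior of $\overline{{\rm Mov}}\,(M)$ and $D$ is big by Kodaira's lemma; hence $(M,\epsilon D)$ is klt of log general type and its MMP runs and terminates by \cite{BCHM10}. Since $K_M=0$, every step is a flop; since the only contractions of $M$ are the two small contractions induced by $p_1,p_2$, Kawamata's theorems (\cite[Theorem 5.7]{Ka88}, \cite[Theorem 1]{Ka08}) show every flop is $\tau_i$ up to ${\rm Aut}\,(M)$ and ${\rm Bir}\,(M)={\rm Aut}\,(M)\cdot\langle\tau_1,\tau_2\rangle$; so the MMP output is $g^*D$ nef for some $g\in{\rm Bir}\,(M)$, and nef implies semi-ample here because integral nef classes are restrictions of globally generated bundles on $\PP^n\times\PP^n$. (This same argument is also what proves your unproved claim that $\overline{{\rm Mov}}\,(M)$ is the closure of the dihedral union of nef chambers.) Your alternative plan, to extract all of this from a direct analysis of the positive-dimensional fibers of $p_1,p_2$, is left entirely unexecuted, and it is not a routine verification --- it is the heart of the proof.

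A second, smaller but real error: you assert that $h_1,h_2$ are not ample ``since neither $p_i$ is an isomorphism.'' That inference is false: the pullback of an ample class under any \emph{finite} morphism is ample, so a non-isomorphic double cover could perfectly well have $p_i^*\sO_{\PP^n}(1)$ ample. You must show each $p_i$ has a positive-dimensional fiber (equivalently, is not finite). The paper gets this from Koll\'ar's appendix to \cite{Bo91}: its $M$ is an anticanonical member of the Fano $(n+1)$-fold $V$ cut out by the $(1,1)$-forms, so $\overline{{\rm Amp}}\,(M)\simeq\overline{{\rm Amp}}\,(V)$, and $H_i|_V$ is visibly non-ample because $V\to\PP^n$ has positive-dimensional fibers. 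The same device is available in your construction --- your $M$ is anticanonical in the Fano $(n+1)$-fold $W$ cut out by the $(1,2)$-hypersurface and the $n-2$ bidegree-$(1,1)$ hypersurfaces, since $-K_W=(2,1)|_W$ is ample --- or one can rule out finiteness of $p_i$ by noting that a smooth finite double cover of $\PP^n$ ($n\ge 3$) with trivial canonical bundle has Picard number one, contradicting $\rho(M)=2$; but some such argument must be supplied.
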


Our manifold $M$ and $f \in {\rm Bir}\, (M)$ in Theorem \ref{thm2} are explicit (See Section 3). If $f$ is primitive, then ${\rm ord}\, (f) = \infty$ (\cite[Lemma 3.2]{Og16-1}). In particular, $\rho(M) \ge 2$ if $M$ is a Calabi-Yau manifold with primitive $f \in {\rm Bir}\, (M)$ (see eg. \cite{Og14}). So, Theorem \ref{thm2} also shows that the estimate $\rho(M) \ge 2$ is optimal for Calabi-Yau manifolds with primitive birational automorphisms in each dimension $\ge 3$. We prove Theorem \ref{thm2} in Section 3 as an application of Theorem \ref{thm1}.

We refer \cite{Og15}, \cite{Og16-2} and references therein for background and known results relevant to our main theorems. The following standard terminologies and remarks will be frequently used in this note: 

\begin{remark}\label{rem1} Let $X$ be a normal projective variety and $f \in {\rm Bir}\, (X)$.

\begin{enumerate}

\item Following \cite{Ka88}, we say that a $\Q$-Cartier Weil divisor $D$ is {\it movable} (resp. {\it semi-ample}) if there is a positive integer $m$ such that the complete linear system $|mD|$ has no fixed component (resp. is free).

\item Assume that $X$ is not necessarily smooth but $\Q$-factorial. Then any Weil divisors (codimension one cycles) are $\Q$-Cartier divisors (codimension one cocycles) on $X$. We denote by $N^1(X)_{\Q}$ the finite dimensional $\Q$-linear space spanned by the numerical equivalence classes of Weil divisors on $X$ over $\Q$. So, in this case, one can define the map $g^* : N^1(Y)_{\Q} \to N^1(X)_{\Q}$ for a rational map $g : X \dasharrow Y$ in the same way as explained already (over $\Q$, not necessarily over $\Z$). 

\item Assume that either $f \in {\rm Aut}\, (X)$ or $X$ is $\Q$-factorial and $f \in {\rm Bir}\, (X)$ is isomorphic in codimension one. For instance, this is the case when $X$ is a minimal Calabi-Yau variety and $f \in {\rm Bir}\, (X)$ (see eg. \cite[Page 420]{Ka08}). Then, $d_1(f)$ is well-defined by the same formula (\ref{eq11}). Moreover, the correspondence $f \mapsto f^*$ is functorial in the sense that $(f \circ g)^* = g^* \circ f^*$ for $f, g \in {\rm Aut}\, (X)$ in the first case and for any $f, g \in {\rm Bir}\, (X)$ in the second case. In particular, $f^* \in {\rm GL}\, (N^1(X)_{\Q})$. Moreover, $d_1(f)$ coincides with the spectral radius of $f^* | N^1(X)_{\Q}$ by $(f^k)^* = (f^*)^k$ for any $k \in \Z$. One observes this by combining the Birkhoff-Perron-Frobenius theorem (\cite{Br67}) applied for the movable cone $\overline{\rm Mov}\,(X) \subset N^1(X)_{\R}$ and the Jordan canonical form of $f^*|N^1(X)_{\Q}$ over $\C$. 

\end{enumerate} 
\end{remark}

\section{Proof of Theorem \ref{thm1}.}

In this section, we prove Theorem \ref{thm1}. Throughout this section, unless stated otherwise, $X$ is a minimal Calabi-Yau variety satisfying the conditions (1) and (2) in Theorem \ref{thm1}. 

\begin{lemma}\label{lem21} $X$ has no $f$-equivariant rational fibration $\pi : X \dasharrow B$ such that $0 < \dim\, B < \dim\, X$ and $\kappa(\tilde{X}_b) = 0$ for general $b \in B$. Here $\nu : \tilde{X} \to X$ is a Hironaka resolution of the indeterminacy of $\pi$ and the singularities of $X$, $\tilde{X}_b$ is the fiber over $b \in B$ of the morphism $\tilde{\pi} := \pi \circ \nu : \tilde{X} \to B$ and $\kappa(\tilde{X}_b)$ is the Kodaira dimension of $\tilde{X}_b$. 
\end{lemma}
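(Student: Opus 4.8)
The plan is to argue by contradiction. Suppose such an $f$-equivariant rational fibration $\pi : X \dasharrow B$ exists, with $f_B \in {\rm Bir}\,(B)$ satisfying $\pi \circ f = f_B \circ \pi$, $0 < \dim B < n$ (where $n = \dim X$), and $\kappa(\tilde{X}_b) = 0$. The central difficulty is that pullback on $N^1(-)_{\Q}$ is \emph{not} functorial for rational maps, so the equivariance of $\pi$ does not by itself produce an $f^*$-invariant subspace of $N^1(X)_{\Q}$ on which to apply hypothesis (2). The whole strategy is therefore to convert $\pi$ into an honest \emph{morphism} on a birational minimal Calabi-Yau model, using hypothesis (1), and to run the irreducibility argument there, where functoriality is restored.

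First I would fix an effective ample divisor $A$ on $B$ and set $D := \pi^* A = \nu_* \tilde{\pi}^* A$. Since $|mA|$ is base-point-free for $m \gg 0$, its pullback $\tilde{\pi}^*|mA|$ is base-point-free on $\tilde{X}$; pushing forward shows that $D$ is an effective movable divisor with $\kappa(X, D) = \dim B$ whose Iitaka fibration is $\pi$ itself. Applying hypothesis (1) to $D$ yields a minimal Calabi-Yau variety $X'$ and a birational map $g : X' \dasharrow X$ for which $D' := g^* D$ is semi-ample. Semi-ampleness makes the Iitaka fibration of $D'$ a genuine morphism $\phi : X' \to B'$ with $\dim B' = \kappa(X', D') = \kappa(X, D) = \dim B$, so that $0 < \dim B' < n$, and $\phi$ is birationally identified with $\pi \circ g$.

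Next I transport $f$ to $f' := g^{-1} \circ f \circ g \in {\rm Bir}\,(X')$. As $X$ and $X'$ are minimal Calabi-Yau varieties, $g$ is an isomorphism in codimension one, so by the functoriality of Remark \ref{rem1}(3) the isomorphism $g^* : N^1(X)_{\Q} \to N^1(X')_{\Q}$ intertwines $f^*$ and $f'^*$; hence $f'^*|N^1(X')_{\Q}$ is again irreducible over $\Q$. Moreover $\phi$ is $f'$-equivariant, since $\phi \circ f' \sim (\pi \circ g)\circ(g^{-1} f g) = f_B \circ (\pi \circ g) \sim f_{B'}\circ \phi$. The payoff of having passed to a morphism is that functoriality of pullback \emph{does} hold when the outer map is a morphism: resolving only $f'$ by $p, q : W \to X'$ (so $f' \circ p = q$), one finds, for $\beta \in N^1(B')_{\Q}$, that $(\phi \circ f')^* \beta = p_* q^* \phi^* \beta = f'^*(\phi^* \beta)$, because $\phi \circ q$ is a genuine morphism. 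Thus $V' := \phi^* N^1(B')_{\Q}$ is $f'^*$-invariant, and it is nonzero as it contains $\phi^*$ of an ample class on $B'$.

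Irreducibility of $f'^*|N^1(X')_{\Q}$ then forces $V' = N^1(X')_{\Q}$, which is absurd: taking an ample class $H'$ on $X'$ and a curve $C$ in a general fiber of $\phi$ — positive-dimensional because $\dim B' < n$ — the relation $H' \equiv \phi^* \beta$ gives $H' \cdot C = \beta \cdot \phi_* C = 0$, contradicting $H' \cdot C > 0$. This contradiction proves the lemma. The step I expect to be the main obstacle is precisely the failure of functoriality for the rational fibration $\pi$ on $X$ itself; circumventing it is exactly why hypothesis (1) is invoked, to trade $\pi$ for the honest morphism $\phi$, and the delicate points are verifying that both the $f$-equivariance and the irreducibility genuinely survive this detour (for which the codimension-one nature of $g$ is essential). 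I note that the argument as sketched does not visibly use $\kappa(\tilde{X}_b) = 0$ beyond its being the case of interest, so I would expect the author's actual proof either to exploit $\kappa = 0$ in producing or controlling the movable divisor, or to reserve this hypothesis because the remaining Kodaira dimensions are excluded or reduced to this case elsewhere.
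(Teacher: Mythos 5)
Your overall route is the same as the paper's: pull back an ample class from $B$ to get a movable divisor, invoke hypothesis (1) to pass to a minimal model where it is semi-ample, replace the rational fibration by the resulting Iitaka \emph{morphism}, and then contradict irreducibility of $f^*|N^1$ via the pullback of the N\'eron--Severi space of the base. However, two steps you take for granted are genuine gaps, and the first one is exactly the heart of the lemma.

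\textbf{First gap.} You assert that $D=\nu_*\tilde\pi^*A$ satisfies $\kappa(X,D)=\dim B$ with Iitaka fibration equal to $\pi$. This is not automatic, and it is precisely the content of the paper's Claim 2.2. Since $D$ is a $\Q$-Cartier Weil divisor on the normal variety $X$, its sections are computed upstairs: for suitably divisible $m$ one has $\nu^*(mD)=m\tilde\pi^*A+\sum_i c_iE_i$ with integers $c_i\ge 0$, where the $E_i$ are the $\nu$-exceptional prime divisors, and $H^0(X,\sO_X(mD))=H^0\bigl(\tilde X,\sO_{\tilde X}(m\tilde\pi^*A+\sum_i c_iE_i)\bigr)$. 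Restricted to a general fiber $\tilde X_b$ this linear system is that of the effective divisor $\sum_i c_iE_i|_{\tilde X_b}$, which a priori may move; if it does, then $\kappa(X,D)>\dim B$, the Iitaka fibration of $D'$ is no longer birational to $\pi$ (it can even be generically finite), and your terminal subspace $\phi^*N^1(B')_{\Q}$ need not be proper in $N^1(X')_{\Q}$, so no contradiction with irreducibility results. The hypothesis you flagged as ``not visibly used'' is used exactly here: since $X$ is a \emph{minimal} Calabi--Yau variety (terminal with $K_X\sim 0$), one has $K_{\tilde X}=\sum_i b_iE_i$ with all $b_i>0$, so $\sum_i c_iE_i|_{\tilde X_b}$ is dominated by a multiple of $K_{\tilde X_b}$; hence $\kappa\bigl(\nu^*D|_{\tilde X_b},\tilde X_b\bigr)\le\kappa(\tilde X_b)=0$, and Ueno's easy addition theorem (\cite[Theorem 5.11]{Ue75}) gives $\kappa(X,D)\le\dim B+0=\dim B$. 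Without $\kappa(\tilde X_b)=0$ this step, and with it your entire reduction, collapses.

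\textbf{Second gap.} Your invariance argument for $V'=\phi^*N^1(B')_{\Q}$ is incomplete. The identity $(\phi\circ f')^*=f'^*\circ\phi^*$ that you prove is correct, but to conclude $f'^*(V')\subset V'$ you also need $(f_{B'}\circ\phi)^*\beta=\phi^*\bigl(f_{B'}^*\beta\bigr)$, and this is \emph{false} in general when $f_{B'}$ is merely birational: for $h:S\to Y$ the blow-up of a point $p$ of a surface $Y$ and $g:Y\dasharrow\BP^1$ the projection from $p$, one has $(g\circ h)^*\sO(1)=h^*\ell-e\ne h^*g^*\sO(1)=h^*\ell$. The error term lives over the indeterminacy locus of $f_{B'}$, and since $\phi$ may contract divisors onto that locus, it need not vanish. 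The paper closes this hole with its Claim 2.3: because the base is $\mathrm{Proj}\,\oplus_{k\ge0}H^0(X',\sO_{X'}(kD'))$ and $f'$ is an isomorphism in codimension one, the induced base map is an honest \emph{automorphism} of the base; then both maps in the composition are morphisms and the functoriality you invoke is legitimate. Your sketch needs this argument (or a substitute) to become a proof.
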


\begin{proof} Assuming to the contrary that $X$ has an $f$-equivariant rational fibration $\pi : X \dasharrow B$ such that $0 < \dim\, B < \dim\, X$ and $\kappa(\tilde{X}_b) = 0$ for general $b \in B$, we shall derive 
a contradiction.

By taking a Hironaka resolution of singularities, we may and will assume that $B$ is smooth. Let $H$ be a very ample divisor on $B$. Set $L := \nu_*(\tilde{\pi}^*H)$. Then $|L|$ is movable and $\pi = \Phi_{\Lambda}$. Here $\Lambda \subset |L|$ is a sublinear system of $|L|$ and $\Phi$ is the rational map associated to $\Lambda$. Then, by the assumption (1) in Theorem \ref{thm1}, there are a minimal Calabi-Yau variety $X'$ and a birational map $g : X' \dasharrow X$ such that $g^*L$ is semi-ample. So, by replacing $(X, f)$ by 
$$(X', f' := g^{-1} \circ f \circ g)\,\, ,$$ 
we may and will assume that $|L|$ is semi-ample. Note that $(f')^*| N^1(X')_{\Q}$ is irreducible over $\Q$, as $g^{\pm 1}$ and $f$ are isomorphic in codimension one so that 
$$(f')^* = (g^{-1} \circ f \circ g)^* = g^* \circ f^* \circ (g^*)^{-1}\,\, .$$
Let us take a sufficiently large positive integer $m$ such that $|mL|$ is free and the morphism $\varphi_m = \Phi_{|mL|}$ is the Iitaka-Kodaira fibration associated to $L$. Set $B' = \varphi(X)$. Then $B'$ is normal, projective and $\dim\, B' = \kappa(L, X)$. Here $\kappa(L, X)$ is the Iitaka-Kodaira dimension of $L$. As $\Lambda \subset |L| \subset |mL|$, the surjective morphism $\varphi_m : X \to B'$ factors through $\pi : X \dasharrow B$, i.e., there is a (necessarily dominant) rational map $\rho : B' \dasharrow B$ such that $\pi = \rho \circ \varphi_m$. 

\begin{claim}\label{cl22}
$\rho : B' \dasharrow B$ is a birational map.
\end{claim}
\begin{proof}
As $\pi$ is of connected fibers and $\rho$ is dominant, it suffices to show that $\dim\, B = \dim\, B'$, i.e., that $\dim\, B =  \kappa(L, X)$. Set $\tilde{L} := \nu^*L$. Then $\kappa(L, X) = \kappa(\tilde{L}, \tilde{X})$. In what follows, we shall prove $\dim\, B = \kappa(\tilde{L}, \tilde{X})$. 

Let $\{E_i\}_{i=1}^{m}$ be the set of exceptional prime divisors of $\nu$. Then we can write
$$\nu^*L = \tilde{\pi}^*H + \sum_{i=1}^{m} a_iE_i\,\, ,\,\, K_{\tilde{X}} = \sum_{i=1}^{m} b_iE_i\,\, .$$
Here $a_i \ge 0$ as $\nu$ resolves the indeterminacy of $\pi$ and $b_i > 0$ as $X$ is a minimal Calabi-Yau variety. 
Note that $\tilde{L}|_{\tilde{X}_b} = a_iE_i|_{\tilde{X}_b}$ as $H|_{\tilde{X}_b}$ is trivial and $K_{\tilde{X}_b} = b_i E_i|_{\tilde{X}_b}$ by the adjunction formula. Then there is a positive integer $m$ such that $m K_{\tilde{X}_b} - \tilde{L}|_{\tilde{X}_b}$ is linearly equivalent to an effective divisor. 
Hence
$$0 \le \kappa(\tilde{L}|_{\tilde{X}_b}, \tilde{X}_b) \le \kappa(\tilde{X}_b) = 0\,\, ,$$
as $b \in B$ is general and $\kappa(\tilde{X}_b) = 0$ by the assumption. Thus 
$\kappa(\tilde{L}|_{\tilde{X}_b}, \tilde{X}_b) = 0$. Therefore
$$\dim\, B  \le \kappa(\tilde{L}, \tilde{X}) \le \dim\, B + \kappa(\tilde{L}|_{\tilde{X}_b}, \tilde{X}_b) = \dim\, B\,\, .$$
Here the second inequality follows from \cite[Theorem 5.11]{Ue75}. 
Hence $\dim\, B = \kappa(\tilde{L}, \tilde{X})$ as desired.  
\end{proof}

By Claim \ref{cl22}, we may and will assume that $\pi : X \to B$ is an $f$-equivariant surjective {\it morphism} given by the free complete linear system $|L|$, by replacing $\pi : X \dasharrow B$ by $\varphi_m : X \to B'$ for sufficiently large divisible $m$. Then $B = {\rm Proj}\,\oplus_{k\ge 0} H^0(X, \sO_X(kL))$ and $\pi = \Phi_{|L|}$. 
\begin{claim}\label{cl23}
There is $f_B \in {\rm Aut}\, (B)$ ({\it not only in ${\rm Bir}\, (B)$}) such that $f_B \circ \pi = \pi \circ f$ as a rational map from $X$ to $B$. 
\end{claim}
\begin{proof}
As $f$ is isomorphic in codimension one, the pullback $f^*$ induces an isomorphism 
$$f_* : B = {\rm Proj}\,\oplus_{k\ge 0} H^0(X, \sO_X(kL)) \simeq {\rm Proj}\, \oplus_{k\ge 0} H^0(X, \sO_X(kf^*L)) = B\,\, .$$
Here the last equality is the one under the identification of ${\rm Proj}\,\oplus_{k\ge 0} H^0(X, \sO_X(kL))$ with ${\rm Proj}\,\oplus_{k\ge 0} H^0(X, \sO_X(kf^*L))$ defined by $D \mapsto f^*D$ for $D \in |kL|$. 

Then the image of $\pi \circ f = \Phi_{|f^*L|}$ is $B = {\rm Proj}\,\oplus_{k\ge 0} H^0(X, \sO_X(kf^*L))$ under the last identification made above. Thus 
$$f_* \in {\rm Aut}\, (B)\,\, $$ 
under the first identification $B = {\rm Proj}\,\oplus_{k\ge 0} H^0(X, \sO_X(kL))$ and $f_{*}$ satisfies that $\pi \circ f = f_* \circ \pi$. We may now take $f_B = f_*$.  
\end{proof}
As $f_B \in {\rm Aut}\, (B)$, the map $f_B^* : N^1(B)_{\Q} \to N^1(B)_{\Q}$ is a well-defined isomorphism (cf. Remark \ref{rem1}). Note also that 
$$\pi^* \circ f_B^* = f^* \circ \pi^*\,\, .$$ 
Indeed, $(f_B \circ \pi)^* = \pi^* \circ f_B^*$, as $\pi$ and $f_B$ are morphisms. We have also $(\pi \circ f)^* = f^* \circ \pi^*$, as $f$ is isomorphic in codimension one and $X$ is normal and $\Q$-factorial (cf. Remark \ref{rem1}). Thus $\pi^* \circ f_B^* = f^* \circ \pi^*$ from $f_B \circ \pi = \pi \circ f$.  Hence the subspace $\pi^*N^1(B)_{\Q} \subset N^1(X)_{\Q}$ is $f$-stable by $f_B^*( N^1(B)_{\Q}) = N^1(B)_{\Q}$. On the other hand, as $0 < \dim\, B < \dim\, X$, $X$ and $B$ are projective and $\pi$ is a {\it morphism}, 
it follows that 
$$0 \not= \pi^*N^1(B)_{\Q} \not= N^1(X)_{\Q}\,\, ,$$ 
a contradiction to the irreducibility of the action $f^*$ on $N^1(X)_{\Q}$, i.e., the assumption (2) in Theorem \ref{thm1}. 
This completes the proof of Lemma \ref{lem21}.  
\end{proof}

We say that a statement (P) on closed points of a projective variety $B$ holds for {\it any very general point} $b \in B$ if there is a countable union $Z$ of proper Zariski closed subsets of $B$ such that (P) holds for any $b \in B \setminus Z$.

\begin{lemma}\label{lem22} Let $P$ be a very general closed point of $X$. Then $f^n$ is defined at $P$ for all $n \in \Z$ and the set 
$\{f^n(P)\, |\, n \in \Z\}$ is Zariski dense in $X$. 
\end{lemma}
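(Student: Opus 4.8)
The plan is to prove the two assertions separately, the first being routine and the second the heart of the matter.

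For the first assertion, note that for every $n \in \Z$ the iterate $f^n$ is again an element of ${\rm Bir}\, (X)$, so its indeterminacy locus ${\rm Ind}\, (f^n)$ is a proper Zariski closed subset of $X$. Hence $\bigcup_{n \in \Z} {\rm Ind}\, (f^n)$ is a countable union of proper Zariski closed subsets, and any very general $P$ lies outside it; for such $P$, $f^n$ is defined at $P$ for all $n \in \Z$. Before addressing density I would record that $f$ has infinite order. Indeed, if $f^N = {\rm id}$ for some $N \ge 1$, then for an ample divisor $H$ the class $A := \sum_{i=0}^{N-1} (f^i)^*H$ is a nonzero element of $N^1(X)_{\Q}$ (a sum of nonzero movable classes, as each $f^i$ is isomorphic in codimension one), and $f^*A = A$ by the functoriality recalled in Remark \ref{rem1}. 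Thus $A$ spans a nonzero proper $f^*$-stable subspace (proper since $\rho(X) \ge 2$), contradicting the irreducibility assumption (2). Consequently $f^n \neq {\rm id}$ for all $n \neq 0$, each $\{P : f^n(P) = P\}$ is a proper closed subset, and so the orbit of a very general $P$ is infinite.

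For density I argue by contradiction, supposing that the orbit $O_P := \{f^n(P)\, |\, n \in \Z\}$ fails to be Zariski dense for $P$ outside no countable union of proper closed subsets. Writing $Y_P := \overline{O_P}$, the key step is to show that, for very general $P$, the $Y_P$ are the general fibres of a rational fibration $\pi : X \dasharrow B$. Following the circle of ideas of Amerik and of \cite{AV16}, \cite{Bi16}, one forms the Zariski closure $\Gamma$ in $X \times X$ of the union $\bigcup_{n \in \Z} \Gamma_{f^n}$ of the graphs of the iterates and takes the irreducible component dominating the first factor; its fibre over a very general $P$ is exactly $Y_P$, and symmetry in $f^{\pm n}$ makes ``lying in a common orbit closure'' a generically defined equivalence relation, whose quotient produces $\pi$. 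Since $O_P$ is infinite but not dense, $1 \le \dim Y_P \le \dim X - 1$, whence $0 < \dim B < \dim X$; and because $f$ permutes $O_P$ and hence preserves $Y_P$, we get $\pi \circ f = \pi$, so $\pi$ is $f$-equivariant with $f_B = {\rm id}_B$.

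It then remains to contradict Lemma \ref{lem21} by analysing $\kappa(F)$ for the general fibre $F = Y_P$. Exactly as in the computation of Claim \ref{cl22}, on a common resolution $\nu : \tilde{X} \to X$ one has $K_{\tilde{X}_b} = \sum_i b_i E_i|_{\tilde{X}_b}$ with $b_i > 0$, which is effective; hence $\kappa(F) \ge 0$. Since $f_B = {\rm id}$, the map $f$ restricts to a birational self-map $f|_F$ of $F$ whose orbit $O_P$ is dense in $F$, so $f|_F$ has infinite order. Were $F$ of general type, ${\rm Bir}\, (F)$ would be finite, forcing $O_P$ finite, which is absurd; thus $0 \le \kappa(F) < \dim F$. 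The relative Iitaka fibration of the fibres of $\pi$, being canonical and therefore preserved by $f$, then assembles into an $f$-equivariant rational fibration $\pi' : X \dasharrow B'$ through which $\pi$ factors, with $0 < \dim B' < \dim X$ and with general fibre of Kodaira dimension $0$. This contradicts Lemma \ref{lem21} and finishes the argument.

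The main obstacle is the construction in the third paragraph, namely turning the set-theoretic orbit closures $Y_P$ of very general points into the fibres of an honest $f$-equivariant rational fibration: this requires using the algebraicity of $\Gamma$ together with the countability underlying the ``very general'' locus with care, and is the technical core that follows the dynamical arguments of \cite{AV16}, \cite{Bi16}. The remaining steps are either routine (the indeterminacy count) or reductions to standard facts (finiteness of birational automorphisms of varieties of general type and equivariance of the Iitaka fibration) feeding into Lemma \ref{lem21}.
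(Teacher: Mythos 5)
Your first paragraph (avoiding the countably many indeterminacy loci, and the infinite order of $f$ via irreducibility of $f^*|N^1(X)_{\Q}$) is fine. The genuine gap is in your third paragraph, at the step you yourself flag as the technical core: producing, from the orbit closures $Y_P$ of very general points, an honest $f$-equivariant rational fibration $\pi : X \dasharrow B$ whose very general fibres are exactly the $Y_P$. Taking the Zariski closure $\Gamma$ of $\bigcup_{n}\Gamma_{f^n}$ and ``quotienting by a generically defined equivalence relation'' is not a routine construction: one must show that the fibre of (a component of) $\Gamma$ over a very general point really equals the orbit closure, that these closures fit into an algebraic family, that the equivalence relation admits a meromorphic quotient at all (this requires Chow-scheme/cycle-space machinery in the style of Campana), and that the general fibre of the quotient is irreducible --- which you need later, to make sense of $\kappa(F)$, of ${\rm Bir}\,(F)$, and of $f|_F \in {\rm Bir}\,(F)$. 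There is also a logical wrinkle in your reduction: the negation of the lemma does not immediately give that the \emph{very general} point has non-dense orbit, only that non-dense orbits meet every very general locus; this dichotomy is resolved only once the fibration exists. All of this is precisely the content of the theorem of Amerik--Campana, \cite[Th\'eor\`eme 4.1]{AC13}, which the paper invokes at exactly this point: it provides a smooth projective $C$ and a dominant rational map $\rho : X \dasharrow C$ with $\rho \circ f = \rho$ whose fibre through a very general point is the orbit closure. As written, your proposal attempts to re-derive this theorem by a sketch; cite it instead and the gap closes.

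Granting the fibration, your endgame is correct but genuinely different from the paper's, and longer. You bound the Kodaira dimension of the general fibre ($\ge 0$ because $K_{\tilde X}$ is effective and $\nu$-exceptional for terminal $X$ with trivial canonical class; $< \dim F$ because a variety of general type has finite birational automorphism group, incompatible with an infinite dense orbit in $F$), then pass to the relative Iitaka fibration to manufacture an $f$-equivariant fibration with fibres of Kodaira dimension $0$, contradicting Lemma \ref{lem21}; since Lemma \ref{lem21} is established before this lemma, there is no circularity. The paper never looks at the fibres: from $\rho \circ f = \rho$ and the fact that $f$ is an isomorphism in codimension one it gets $\rho^* = f^* \circ \rho^*$, so $f^*$ is the identity on the subspace $\rho^*N^1(C)_{\Q} \subset N^1(X)_{\Q}$; irreducibility of $f^*|N^1(X)_{\Q}$ together with Picard number $\ge 2$ forces $\rho^*N^1(C)_{\Q} = 0$, hence $\dim C = 0$, which is the density statement. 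That argument is a few lines of linear algebra and avoids both the finiteness of ${\rm Bir}$ for varieties of general type and the equivariance of relative Iitaka fibrations, each of which carries additional details you would still need to verify in your route.
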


\begin{proof} The first assertion is clear. We show the second assertion. By \cite[Th\'eor\`eme 4.1]{AC13}, there is a smooth projective variety $C$ and a dominant rational map $\rho : X \dasharrow C$ such that $\rho \circ f = \rho$ as a rational map and $\rho^{-1}(\rho(P))$ is the Zariski closure of $\{f^n(P)\, |\, n \in \Z\}$ for very general $P \in X$. 

Recall that the map $\rho^*  : N^1(C)_{\Q} \to N^1(X)_{\Q}$ is well-defined, as $X$ is normal and $\Q$-factorial (Remark \ref{rem1}). As $f$ is isomorphic in codimension one and $\rho = \rho \circ f$, we have 
$$\rho^* = f^* \circ \rho^* : N^1(C)_{\Q} \to N^1(X)_{\Q}$$ 
as in the proof of Lemma \ref{lem21} (See also \cite[Lemma 4.5]{Bi16}). Thus 
$$f^*|\rho^*N^1(C)_{\Q} = id_{\rho^*N^1(C)_{\Q}}\,\, .$$ 
As $\rho(X) \ge 2$ and $f$ is irreducible on $N^1(X)_{\Q}$ by our assumptions in Theorem \ref{thm1}, it follows that 
$$\rho^*N^1(C)_{\Q} = \{0\}\,\, .$$ 
As $C$ is projective, this is possible only when $\dim\, C = 0$, i.e., $C$ is a point. This implies the second assertion. 
\end{proof}

The following important proposition is due to Bianco \cite[Proposition]{Bi16}: 
\begin{proposition}\label{prop23} Let $X$ be a projective variety (not necessarily a minimal Calabi-Yau variety) and $f \in {\rm Bir}\, (X)$. Assume that 
$\pi : X \dasharrow B$ is a nontrivial $f$-equivariant rational fibration such that $\tilde{X}_b$ is of general type for general $b \in B$. Here $\nu : \tilde{X} \to X$ is a Hironaka resolution of the singularities of $X$ and the indeterminacy of $\pi$ and $\tilde{X}_b$ is the fiber over $b \in B$ of the morphism $\tilde{\pi} := \pi \circ \nu : \tilde{X} \to B$. Then for any very general point $x \in X$, the well-defined $\langle f \rangle$-orbit $\{f^n(x)| n \in \Z\}$ is never Zariski dense in $X$.  
\end{proposition}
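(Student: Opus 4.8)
The plan is to reduce everything to the following assertion: it suffices to produce an integer $d \ge 1$ and a dominant rational map $\sigma : X \dasharrow Y$ with $\dim Y > 0$ and $\sigma \circ f^d = \sigma$. Granting this, for very general $x$ the $f^d$-orbit of $x$ lies in the fibre $\sigma^{-1}(\sigma(x))$, a proper closed subset; since $\{f^n(x)\, |\, n \in \Z\} = \bigcup_{r=0}^{d-1} f^r(\{f^{dm}(x)\, |\, m \in \Z\})$ and each $f^r(\sigma^{-1}(\sigma(x)))$ is again a proper closed subset for very general $x$ (its generic fibre meets neither the indeterminacy nor the exceptional locus of $f^r$), the whole orbit is contained in a finite union of proper closed subsets, hence is not Zariski dense. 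After a Hironaka resolution I may assume $\pi : X \to B$ is a morphism with $B$ smooth and $f$ equivariant over some $f_B \in {\rm Bir}(B)$; Zariski density of the orbit of a very general point is unaffected by such a birational modification.

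First I would record that, since the general fibre $X_b$ is of general type, $f$ restricts to a birational map $X_b \dasharrow X_{f_B(b)}$ for general $b$, and that ${\rm Bir}(X_b)$ is a \emph{finite} group (finiteness of the birational automorphism group of a variety of general type, via its canonical model). Fixing $m \gg 0$ so that $|mK_{X_b}|$ maps $X_b$ birationally onto its $m$-canonical image for general $b$, the relative $m$-canonical construction over a dense open of $B$ yields a classifying ("moduli") map $\mu : B \dasharrow \sM$ to a parameter space of the canonical models of the fibres. Because $f$ carries $X_b$ birationally onto $X_{f_B(b)}$ and the $m$-canonical image is a birational invariant, these two fibres define the \emph{same} point of $\sM$; hence $\mu \circ f_B = \mu$ as rational maps.

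Now I would split according to the variation of moduli. If $\mu$ is non-constant, set $\sigma := \mu \circ \pi : X \dasharrow \overline{\mu(B)}$; this is dominant onto a positive-dimensional variety and satisfies $\sigma \circ f = \mu \circ f_B \circ \pi = \mu \circ \pi = \sigma$, so $d = 1$ finishes the case. The remaining, and main, case is the \emph{isotrivial} one in which $\mu$ is constant, so all general fibres are birational to one fixed variety $F$ of general type with $G := {\rm Bir}(F)$ finite; note $\dim F = \dim X_b = \dim X - \dim B > 0$ by non-triviality of $\pi$. Here I would pass to a finite cover $B' \to B$ trivialising the resulting $G$-form, so that over $B'$ the family becomes birational to the product $F \times B'$. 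The fibrewise component of $f$ is then an algebraic, hence locally constant, assignment $b \mapsto \phi_b \in G$, and therefore constant, say $\phi \in G$, since $B'$ is irreducible. Letting $d$ be the order of $\phi$ in the finite group $G$, the map $f^d$ acts trivially on the $F$-factor, i.e. the projection to $F$ is $f^d$-invariant; descending along $B' \to B$ produces the required nonconstant $f^d$-invariant rational map $\sigma : X \dasharrow F$.

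The main obstacle is precisely this isotrivial case: making the "locally constant $\Rightarrow$ constant" step rigorous requires choosing the finite base change $B' \to B$ compatibly with $f_B$ (possibly after replacing $f$ by a power), trivialising the $G$-form there, and then descending the $f^d$-invariant projection from $X \times_B B'$ back to $X$. By contrast, the non-isotrivial case is essentially formal once the invariance $\mu \circ f_B = \mu$ of the moduli map has been established, and the initial reduction is elementary.
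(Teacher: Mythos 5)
Your reduction to producing a positive-dimensional $f^d$-invariant dominant rational map is fine, and your non-isotrivial case is correct in outline, but it already leans on an ingredient you do not justify and the paper never uses: a scheme-valued classifying map $\mu : B \dasharrow \sM$ for canonical models of general type varieties (i.e.\ a quasi-projective coarse moduli space), which is what makes $\overline{\mu(B)}$ a variety and $\sigma = \mu \circ \pi$ an honest rational fibration. The genuine gap, however, is the isotrivial case, and you flag it yourself as unresolved: you assume (i) that the family of relative canonical models $p : Y \to U$ (over a dense open $U \subset B$ where $\pi$ is smooth) is a $G$-form trivialized by some finite cover $B' \to B$, (ii) that $B'$ can be chosen so that $f_B$, or a power of it, lifts to $B'$, making the ``fibrewise component'' of $f$ well defined on $F \times B'$, and (iii) that the resulting invariant projection descends from $X \times_B B'$ back to $X$. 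None of these is proved, and they are exactly the kind of steps that Remark \ref{rem24} identifies as the flaw in Bianco's original argument. They can be repaired for general type fibres: since ${\rm Aut}\,(F)$ is finite and reduced and ${\rm Isom}_U(F \times U, Y) \to U$ is proper by Matsusaka--Mumford (general type fibres are non-ruled), this Isom scheme is finite over $U$ and is itself the right choice of $B'$; the map $f$ lifts to it canonically (postcompose each isomorphism $\beta : F \to Y_u$ with the fibrewise isomorphism $Y_u \to Y_{f_B(u)}$ induced by $f$), in the tautological trivialization this lift acts as the identity on the $F$-factor, and the invariant projection descends not to $F$ but to $F/{\rm Aut}\,(F)$, which still has positive dimension. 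But none of this repair work appears in your proposal; as written, the case you yourself call ``the main obstacle'' is assumed rather than proven.

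The paper avoids the whole issue with a different dichotomy, which is the idea you are missing. Instead of splitting according to variation of moduli, it splits according to whether the orbit $O(b) = \{f_B^n(b)\,|\, n \in \Z\}$ of a very general point of $B$ is Zariski dense in $B$. If not, the orbit upstairs lies in $\pi^{-1}(\overline{O(b)})$, a proper closed subset, and we are done. If it is dense, then since $f^n$ induces isomorphisms between the canonical models $Y_{f_B^n(b)}$ and $F = Y_b$, the image of $\tau : {\rm Isom}_U(F \times U, Y) \to U$ is a constructible set with finitely many components containing the dense set $O(b) \cap U$, hence contains a dense open $W \subset U$; a section over $W$ (after shrinking) then trivializes $Y|_W \simeq F \times W$ \emph{directly over a dense open subset of $B$} --- no finite cover, no lifting of $f_B$, no descent. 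The fibrewise components of $f$ then lie in the finite group ${\rm Aut}\,(F)$, so the orbit of a very general point meets $F \times W$ inside $({\rm Aut}\,(F)\cdot t) \times W$, a proper closed subset. In short: density of the base orbit is what converts isotriviality into actual generic triviality of the family, and that is precisely the step your sketch lacks.
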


\begin{remark}\label{rem24} The assertion and proof of \cite[the second statement of Proposition]{Bi16} seem a bit too optimistic (as isotriviality does not necessarily imply global triviality after global \'etale covering. Indeed most minimal ruled surfaces do not admit any global trivializations) and the proof of \cite[the third statement of Proposition]{Bi16}, which we cited above, is based on it. We shall give a more direct proof of Proposition \ref{prop23} below, which is a slight modification of the original proof of \cite[the first statement of Proposition]{Bi16}. 
\end{remark}

\begin{proof} By taking a Hironaka resolution of $B$, $X$ and the indeterminacy $X \dasharrow B$, we may and will assume that $\pi : X \to B$ is a surjective morphism between smooth projective varieties. Let $b \in B$ be any very general point. We may assume without loss of generality that $f_B^n$ ($n \in \Z$) are defined at $b$. We set 
$$O(b) := \{f_B^n(b)| n \in \Z\}\,\, .$$ 
We may and will assume that $O(b)$ is Zariski dense in $B$ for any very general point $b \in B$, as otherwise, the assertion is obvious. Note that, for a given Zariski dense open subset $U' \subset B$, the set $O(b) \cap U'$ is Zariski dense in $B$ and we may also assume that $b \in U'$, as the assertion is made for very general $b \in B$ so that one can remove $B \setminus U'$. This convention will be employed in the rest of proof, whenever it will be convenient. 

Take a Zariski dense open subset $U \subset B$ such that $\pi_{U} := \pi|\pi^{-1}(U) : \pi^{-1}(U) \to U$ is a smooth morphism. As remarked above, we may and will assume that $b \in U$. Then take the relative canonical model 
$$p := \pi_{U}^{\rm can} : Y := {\rm Proj} \oplus_{m \ge 0} (\pi_{U})_* \sO_{X}(mK_{\pi^{-1}(U)})\to U$$  
of $\pi_U : \pi^{-1}(U) \to U$ over $U$ (see \cite[Corollary 1.1.2]{BCHM10}, \cite[Theorem 6.6]{Ka09} for the finite generation). We denote the fiber of $p$ (resp. of $\pi$) over $t \in U$ by $Y_t$ (resp. $X_t$). Note that two canonical models are birationally isomorphic if and only if they are isomorphic (\cite[Corollary 14.3 and its proof]{Ue75}). This is the advantage to pass to the canonical models for us. 

By definition of $Y$, there is a positive integer $\ell$ such that $L := \sO_Y(\ell)$ is a $p$-very ample line bundle with $L|_{Y_t} = \sO_{Y_t}(\ell) = \sO_{Y_t}(\ell K_{Y_t})$ for all $t \in U$. The Euler characteristic $\chi(\sO_{Y_t}(m \ell))$ is constant as a function of $t \in U$ for any large integer $m$ by the invariance of the pluri-genera (\cite{Si98}). Indeed, as $Y_t$ is a canonical model of $X_t$, we have
$$h^0(Y_t, \sO_{Y_t}(m \ell)) = h^0(Y_t, \sO_{Y_t}(m \ell K_{Y_t})) = h^0(X_t, \sO_{X_t}(m \ell K_{X_t}))\,\, .$$
The last term is constant by the invariance of the pluri-genera (\cite{Si98}). We have also $h^i(Y_t, \sO_{Y_t}(m \ell)) = 0$ for all $i > 0$ and for all large $m$ by the Serre vanishing theorem. 

Thus, the projective morphism $p : Y \to U$ is flat. {\it From now we fix $L$ and $\ell$ above and regard $f \in {\rm Bir}\, (Y)$ via the pluri-canonical map $X \dasharrow Y$.} 
 
Let $F = Y_b$ be the fiber of $p$ over $b$. Then the second projection $q : F \times U \to U$ is also a flat projective morphism. Set $H := r^*\sO_{F}(\ell K_F)$. Here $r : F \times U \to F$ is the first projection. Then $H$ is a $q$-very ample line bundle on $F \times U$. We denote by $M$ the relatively very ample line bundle of $(F \times U) \times_U Y \to U$ given by the tensor product of the pull back of $L$ and $H$. 

Now consider the relative isomorphism functor ${\it Isom}_U(F \times U, Y)$ over $U$, which associates to any $U$-scheme $V$ the set ${\it Isom}_U(F \times U, Y)(V)$ of isomorphisms from $(F \times U) \times_U V = F \times V$ to $Y \times_{U} V$ over $V$. As $p$ and $q$ are both projective and flat, the functor ${\it Isom}_U(F \times U, Y)$ is represented by a $U$-scheme ${\rm Isom}_U(F \times U, Y)$, which is realized as an open subscheme of the relative Hilbert scheme ${\rm Hilb}_U((F \times U) \times_U Y)$ over $U$ under the identification of an isomorphism and its graph (see for instance \cite[Theorem 5.23 and its proof]{Ni05}). We denote the structure morphism ${\rm Isom}_U(F \times U, Y) \to U$ 
by $\tau$. 

Let $V \subset U$ be any connected subscheme of $U$. Then, as $F$ is a canonical model of general type, the group ${\it Aut}_{U}(F \times U)(V) := {\it Isom}_{U}(F \times U, F \times U)(V)$ of relative automorphisms over $V$ is isomorphic to ${\rm Aut}(F)$ under $g \mapsto g \times id_V$. This is because any morphism $V \to {\rm Aut}(F)$ is constant, as ${\rm Aut}(F)$ is a finite group (\cite[Corollary 14.3 and its proof]{Ue75}) and $V$ is connected. Moreover, the set ${\it Isom}_U(F \times U, Y)(V)$ is linear, in the sense that any element is given by a projective linear isomorphism under the embedding relative over $V$, with respect to $L|_{p^{-1}(V)}$ and $H|_{q^{-1}(V)}$. This is because the pluri-canonical linear system is preserved under isomorphisms. Hence the Hilbert polynomial of the graph of any isomorphism $\varphi_t : F \to Y_t$ with respect to $M$ is independent of $t \in U$ and $\varphi_t$. We denote the polynomial by $P$. 

Then, under the open embedding ${\rm Isom}_U(F \times U, Y) \subset {\rm Hilb}_U((F \times U) \times_U Y)$, we have
$${\rm Isom}_U(F \times U, Y) = {\rm Isom}_U^{P}(F \times U, Y) \subset {\rm Hilb}_U^{P}((F \times U) \times_U Y)\,\, .$$ 
As the last inclusion is an open immersion and ${\rm Hilb}_U^{P}((F \times U) \times_U Y)$ is projective over $U$ with only finitely many irreducible 
components, it follows that ${\rm Isom}_U(F \times U, Y)$ is quasi-projective over $U$ with only finitely many irreducible components. Recall that $b \in O(b) \cap U$ is Zariski dense in $U$. As $X_{f_B^n(b)}$ and $X_b$ are birational, their canonical models $Y_{f_B^n(b)}$ and $Y_b = F$ are isomorphic. Thus 
$$O(b) \cap U \subset \tau({\rm Isom}_U(F \times U, Y)) \subset U\,\, .$$
Here $\tau({\rm Isom}_U(F \times U, Y))$ is a constructible subset and has only finitely many irreducible components as well. Then, at least one of the irreducible components of $\tau({\rm Isom}_U(F \times U, Y))$, say $V$, is Zariski dense in $U$, as so is $O(b) \cap U$. As $V$ is constructible, it follows that there is a Zariski dense open subset 
$W \subset U$ 
such that $W \subset V \subset \tau({\rm Isom}_U(F \times U, Y))$. Hence possibly after shrinking $W$ a bit, we have ${\it Isom}_U(F \times U, Y)(W) \not= \emptyset$ and therefore $F \times W$ is isomorphic to $Y|_{W}$ over $W$. Now choosing an isomorphism $\rho : F \times W \to Y|_W$ over $W$, we identify $F \times W = Y|_W$ and regard $f \in {\rm Bir}\, (F \times W)$. We denote $f_W = f_B|W \in {\rm Bir}\, (W)$. Then the morphism $q : F \times W \to W$ is $f$-equivariant. 

Let $y := (t, s) \in F \times W$ be any very general point. As $F$ is a canonical model, the map $f^n|F \times \{s\} : F \times \{s\} \to F \times \{f_W^n(s)\}$ is an isomorphism whenever $f_W^n(s) \in W$. In particular, 
$$f^n(y) = f^n((t, s)) = (g_{s}(t), f_W^n(s))$$
for some $g_{s} \in {\rm Aut}\, (F)$. Thus, 
$$\{f^n(y)\, |\, n \in \Z\} \cap (F \times W)\, \subset\, ({\rm Aut}\, (F) \cdot t) \times W\,\, ,$$ 
the latter of which is a proper closed subset of $F \times W$ as $|{\rm Aut}\, (F)| < \infty$. Therefore, the orbit $\{f^n(y)\, |\, n \in \Z\} \cap (F \times W)$ is not Zariski dense in $F \times W$. As $X$ and $F \times W$ are birational under an $f$-equivariant map, this implies the result. 
\end{proof}

Now we are ready to complete the proof of Theorem \ref{thm1}. 

Assume that $X$ admits a non-trivial $f$-equivariant rational fibration $\pi : X \dasharrow B$. Let $\tilde{\pi} : \tilde{X} \to B$ be a Hironaka resolution of indeterminacy of $\pi$ and ${\rm Sing}\, (X)$. Consider the relative Kodaira fibration over $B$ (see \cite[Corollary 1.1.2]{BCHM10}, \cite[Theorem 6.6]{Ka09} for the finite generation):
$$g : X \dasharrow K := {\rm Proj} \oplus_{m \ge 0}\tilde{\pi}_* \sO_{\tilde{X}}(mK_{\tilde{X}})\,\, .$$ 
Then $g$ is $f$-equivariant and $\kappa(\tilde{X}_k) = 0$ for general $k \in K$. Here $\tilde{X}_k$ is a Hironaka resolution of the fiber over $k \in K$. By Lemma \ref{lem22} and Proposition \ref{prop23} and by $\dim\, B > 0$, we have $0< \dim\, K < \dim\, X$. However, this contradicts to Lemma \ref{lem21}. Thus $f$ is primitive. 

We shall show that $d_1(f) > 1$ if $X$ is smooth. As $f$ is isomorphic in codimension one, $f^*$ preserves the movable cone $\overline{\rm Mov}(X) \subset N^1(X)_{\R}$, which is, by definition, the closed convex hull of the movable divisor classes in $N^1(X)_{\R}$. As $\overline{\rm Mov}(X)$ is a strictly convex closed cone in $N^1(X)_{\R}$, it follows from the Birkhoff-Perron-Frobenius theorem (\cite{Br67}) 
that there is $0 \not= v \in \overline{\rm Mov}(X)$ such that $f^*v = d_1(f)v$. As $X$ is smooth (hence the Weil divisors are Cartier divisors), $f^* : N^1(X) \to N^{1}(X)$ is a well-defined isomorphism over $\Z$ (not only over $\Q$). Thus the product of the eigenvalues of $f^*$ is of absolute value $1$, as $\Z^{\times} = \{\pm 1\}$. Thus $d_1(f) \ge 1$. If $d_1(f) = 1$, then there would be $0 \not= u \in N^1(X)_{\Q}$ such that $f^*(u) = u$. Hence $d_1(f) >1$, as $f^*|N^1(X)_{\Q}$ is irreducible over $\Q$ and $\rho(X) \ge 2$ by our assumptions in Theorem \ref{thm1}. This completes the proof of Theorem \ref{thm1}. 

\section{Proof of Theorem \ref{thm2}.}

In this section, we shall prove Theorem \ref{thm31} below, from which Theorem \ref{thm2} follows. 

Calabi-Yau manifolds in Theorem \ref{thm31} are higher dimensional generalization of Calabi-Yau threefolds studied in \cite[Section 6]{Og14}.  

Let $n$ be an integer such that $n \ge 3$. Let 
$$M = F_1 \cap F_2 \cap \ldots \cap F_{n-1} \cap Q \subset \BP^n \times \BP^n$$ be a general complete intersection of $n-1$ hypersurfaces $F_i$ ($1 \le i \le n-1$) of bidegree $(1,1)$ and a hypersurface $Q$ of bidegree $(2, 2)$ in 
$\BP^n \times \BP^n$. Then, by the Lefschetz hyperplane section theorem, $M$ is a smooth Calabi-Yau manifold of dimension $n$ and of Picard number $2$. More precisely, 
$${\rm Pic}\, (M) \simeq N^1(M) = \Z h_1 \oplus \Z h_2\,\, .$$
Here and hereafter, $p_i :  \BP^n \times \BP^n \to \BP^n$ is the projection to the $i$-th factor, $L_i$ is the hyperplane class of the $i$-th $\BP^n$, $H_i := p_i^*L_i$, $h_i = H_i|_M$ ($i=1$, $2$). 

Let 
$$V = \cap_{i=1}^{n-1} F_i \subset \BP^n \times \BP^n\,\, .$$ 
Then $V$ is a smooth Fano manifold of dimension $n+1$ with ${\rm Pic}\, (V) = \Z H_1|V \oplus \Z H_2|V$ and $M \in |-K_V|$. Note that $H_i|V$ gives the $i$-th projection $p_i|V : V \to \BP^n$ to the $i$-th factor. In particular, $H_i|V$ is free (hence nef) but not ample. Then, by a result of Koll\'ar \cite[Appendix]{Bo91}, we have $\overline{{\rm Amp}}\, (V) \simeq \overline{{\rm Amp}}\, (M)$ under the inclusion map $M \subset V$, and therefore
$$\overline{{\rm Amp}}\, (M) = \R_{\ge 0}h_1 + \R_{\ge 0}h_2$$ 
in $N^1(M)_{\R} := N^1(M) \otimes_{\Z} {\R}$. In particular, any nef divisor on $M$ is semi-ample. 

We also note that $g^{*}(h_1 + h_2) = h_1 + h_2$ for $g \in {\rm Aut}\, (M)$. Hence $|{\rm Aut}\, (M)| < \infty$ (see eg. \cite[Proposition 2.4]{Og14}). So, if $f \in {\rm Bir}\, (M)$ is primitive, then necessarily $f \not\in {\rm Aut}\, (M)$ in our case, as $f$ has to be 
of infinite order (\cite[Lemma 3.2]{Og16-1}).  

Consider the projections 
$$\pi_i := p_i|M : M \to \BP^n$$
($i = 1$, $2$). Then $\pi_i$ are of degree $2$ by the definition of $M$. Hence we have a birational involution $\tau_i \in {\rm Bir}\, (M)$ ($i =1$, $2$) corresponding to $p_i$. We consider the birational automorphism $f \in {\rm Bir}\, (M)$ 
defined by 
$$f := \tau_1 \circ \tau_2\,\, .$$
Our main result of this section is the following:

\begin{theorem} \label{thm31}
Under the notation above, $f$ is a primitive birational automorphism of 
$M$ with 
$$d_1(f) = (2n^2 -1) + 2n\sqrt{n^2 -1} >1\,\, .$$
\end{theorem}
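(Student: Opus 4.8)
The plan is to deduce the whole statement from Theorem \ref{thm1}, applied with $X = M$ and $f = \tau_1 \circ \tau_2$. Since $M$ is a smooth Calabi-Yau manifold of dimension $n \ge 3$ with $\rho(M) = 2$, and since (Remark \ref{rem1}) the birational involutions $\tau_i$, and hence $f$, are isomorphisms in codimension one on the minimal Calabi-Yau $M$, the only hypotheses I must check are (2) that $f^*|N^1(M)_{\Q}$ is irreducible over $\Q$, and (1) that every movable effective divisor becomes semi-ample on some minimal Calabi-Yau birational model. Both reduce to an explicit description of $f^* = \tau_2^* \circ \tau_1^*$ on $N^1(M) = \Z h_1 \oplus \Z h_2$, so I would compute this linear map first.

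To find $\tau_1^*$, I would use $\pi_1 \circ \tau_1 = \pi_1$, which gives $\tau_1^* h_1 = h_1$, together with the fact that $\tau_1$, being an isomorphism in codimension one, preserves all top intersection numbers. These I would read off from the ambient $\BP^n \times \BP^n$: since $[M] = 2(H_1 + H_2)^n$, one gets $(h_1^i \cdot h_2^{n-i})_M = 2\binom{n}{i}$. Writing $\tau_1^* h_2 = a h_1 + b h_2$, involutivity $(\tau_1^*)^2 = {\rm id}$ forces $b = \pm 1$, and preservation of $(h_1^{n-1} \cdot h_2)_M$ forces $a + bn = n$; the trivial possibility $\tau_1^* = {\rm id}$ is geometrically excluded (for instance $\tau_1$ does not preserve the linear system $|h_2|$ defining the second projection), leaving $\tau_1^* h_2 = 2n h_1 - h_2$. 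By the symmetry exchanging the two factors, $\tau_2^* h_1 = 2n h_2 - h_1$ and $\tau_2^* h_2 = h_2$. Composing, $f^* = \tau_2^* \circ \tau_1^*$ has matrix $\begin{pmatrix} -1 & -2n \\ 2n & 4n^2 - 1 \end{pmatrix}$ in the basis $(h_1, h_2)$, of determinant $1$ and trace $4n^2 - 2$, hence eigenvalues $(2n^2 - 1) \pm 2n\sqrt{n^2 - 1}$. For $n \ge 2$ the integer $n^2 - 1$ is not a perfect square, so these eigenvalues are irrational; thus $f^*|N^1(M)_{\Q}$ has no rational eigenvalue and, being $2$-dimensional, is irreducible over $\Q$, giving hypothesis (2). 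Since $f$ is isomorphic in codimension one, Remark \ref{rem1}(3) identifies $d_1(f)$ with the spectral radius of $f^*$, namely $(2n^2 - 1) + 2n\sqrt{n^2 - 1}$.

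For hypothesis (1) I would analyze the movable cone via the fact that $\tau_1, \tau_2$ are pseudo-automorphisms, so the group $\Gamma := \langle \tau_1^*, \tau_2^* \rangle \subset {\rm GL}(N^1(M))$ preserves the strictly convex full-dimensional closed cone $\overline{\rm Mov}(M) \subset N^1(M)_{\R} \cong \R^2$. Such a planar cone has exactly two boundary rays, and this pair is $\Gamma$-invariant; since $\Gamma$ is infinite dihedral with $f^* = \tau_2^* \tau_1^*$ of infinite order, the only $\Gamma$-invariant pair of rays is the pair of eigenrays of $f^*$. Hence $\overline{\rm Mov}(M)$ is precisely the cone bounded by these two irrational eigenrays, and it is tiled by the $\Gamma$-translates of the nef cone $\overline{\rm Amp}(M) = \R_{\ge 0}h_1 + \R_{\ge 0}h_2$, each translate serving as a fundamental domain. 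Given a movable effective divisor $D$, its class lies in some chamber $\gamma^*\,\overline{\rm Amp}(M)$, so $(\gamma^{-1})^* D$ is nef; as every nef divisor on $M$ is semi-ample and $\gamma^{-1}$ is an isomorphism in codimension one, taking $X' = M$ and $g = \gamma^{-1}$ verifies (1). With (1) and (2) established, Theorem \ref{thm1} yields that $f$ is primitive and, $M$ being smooth, that $d_1(f) > 1$; combined with the eigenvalue computation this gives $d_1(f) = (2n^2 - 1) + 2n\sqrt{n^2 - 1}$.

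The step I expect to be the main obstacle is the rigorous verification of hypothesis (1): the clean planar argument above pins down $\overline{\rm Mov}(M)$ only once one knows that the $\tau_i$ genuinely are pseudo-automorphisms accounting for the entire movable cone, and that semi-ampleness really transports across the birational models $\gamma^{-1} : M \dashrightarrow M$, so that the identity ``nef $=$ semi-ample'' holds on every chamber and not merely on the ample one. The geometric input that excludes $\tau_1^* = {\rm id}$, and the fact that $\tau_i$ are isometries for the intersection pairing, are the other points where I would need to argue carefully rather than by formal manipulation.
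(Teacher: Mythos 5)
Your overall architecture --- verify hypotheses (1) and (2) of Theorem \ref{thm1} for $X=M$, $f=\tau_1\circ\tau_2$, and identify $d_1(f)$ with the spectral radius of $f^*$ via Remark \ref{rem1}(3) --- is exactly the paper's, and your matrices and eigenvalues agree with Lemma \ref{lem31}. However, your derivation of the matrix of $\tau_1^*$ rests on a false principle. You assert that $\tau_1$, ``being an isomorphism in codimension one, preserves all top intersection numbers.'' This is not true, and it fails in this very example: $\tau_1$ is a flop (see the proof of Lemma \ref{lem33}), and flops do change the top intersection form on $N^1$. Concretely, for $n=3$ one has $(h_1^3)_M=(h_2^3)_M=2$ and $(h_1^2h_2)_M=(h_1h_2^2)_M=6$, so with the correct answer $\tau_1^*h_2=6h_1-h_2$ one computes $((\tau_1^*h_2)^3)_M=216\cdot 2-108\cdot 6+18\cdot 6-2=-110\neq 2=(h_2^3)_M$. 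Thus your principle, applied to $(h_2^n)_M$ instead of $(h_1^{n-1}h_2)_M$, contradicts your own final matrix; it cannot serve as the basis of the computation. The one relation you actually use, $(h_1^{n-1}\cdot\tau_1^*h_2)_M=(h_1^{n-1}\cdot h_2)_M$, is true, but it requires a different justification: since $h_1=\pi_1^*L_1$, $\tau_1^*h_2=(\tau_1)_*h_2$ and $\pi_1\circ\tau_1=\pi_1$, the projection formula gives $(\pi_1^*L_1^{n-1}\cdot\tau_1^*h_2)_M=(L_1^{n-1}\cdot(\pi_1)_*h_2)_{\BP^n}=(h_1^{n-1}\cdot h_2)_M$. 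This is in essence the paper's computation, which moreover avoids your case analysis entirely through the identity $h_2+\tau_1^*h_2=\pi_1^*(\pi_1)_*h_2=2nh_1$. Relatedly, your exclusion of the case $\tau_1^*={\rm id}$ is only asserted (``$\tau_1$ does not preserve $|h_2|$''), which is precisely what needs proof; with the paper's identity no such case split arises.

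On hypothesis (1) your route is genuinely different from the paper's and, in outline, correct and more elementary. You pin down $\overline{\rm Mov}\,(M)$ by noting that its two extremal rays are permuted by the hyperbolic map $f^*$, hence are eigenrays of $f^*$, and you get the chamber structure from the infinite dihedral dynamics of $\langle\tau_1^*,\tau_2^*\rangle$. The paper instead proves $\overline{\rm Mov}\,(M)=V$ in Lemma \ref{lem34} by combining the orbit formula (\ref{eq1}) with a minimal model program argument: for a big class in the interior it runs the $(M,\epsilon D)$-MMP using \cite[Theorem 1.2]{BCHM10}, and uses Lemmas \ref{lem32} and \ref{lem33} (the latter resting on Kawamata's flop decomposition theorem \cite{Ka08}) to identify every step with $\tau_1$ or $\tau_2$. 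Your argument dispenses with BCHM and Kawamata altogether, which is a real simplification, but at the price of two points left unproved: (i) that the chambers $\gamma^*\overline{{\rm Amp}}\,(M)$, $\gamma\in\langle\tau_1,\tau_2\rangle$, actually cover the interior of $V$ --- this does follow from (\ref{eq1}), but it must be argued; and (ii) that the class of a movable effective divisor lies in the interior of $\overline{\rm Mov}\,(M)$, hence in some chamber --- here you must invoke the irrationality of the boundary rays against the rationality of the class, as the paper does explicitly at the end of its proof. Both points are fixable with ingredients you already have, so the movable-cone half of your proposal stands as a legitimate alternative; the matrix computation, as written, does not.
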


\begin{proof}
In the proof, we will frequently use the fact that any birational automorphism of $M$ is isomorphic in codimension one (Remark \ref{rem1}). 

\begin{lemma}\label{lem31}
With respect to the basis $\langle h_1, h_2 \rangle$, the actions of $\tau_i^*|N^1(M)$ ($i=1$, $2$) and $f^*|N^1(X)$ are represented by the following matrices $M_i$, $M_2M_1$ respectively: 
$$M_1 = \left(\begin{array}{rr}
1 & 2n\\
0 & -1\\
\end{array} \right)\,\, ,\,\, M_2 = \left(\begin{array}{rr}
-1 & 0\\
2n & 1\\
\end{array} \right)\,\, ,\,\, M_2M_1 = \left(\begin{array}{rr}
-1 & -2n\\
2n & 4n^2 -1\\
\end{array} \right).$$ 
The eigenvalues of $f^*$ are
$$(2n^2 -1) \pm 2n\sqrt{n^2 -1}\,\, ,$$
which are irrational. In particular, 
$f^*|N^1(X)_{\Q}$ is irreducible over $\Q$.
\end{lemma}

\begin{proof} We have $\tau_1^{*}h_1 = h_1$. We can write $(\pi_1)_*h_2 = aL_1$ for some integer $a$. First, we detemine the value $a$. As $h_2 = \pi_2^*L_2$, we readily compute that
$$a = (aL_1.L^{n-1})_{\BP^n} = ((\pi_1)_*\pi_2^*L_2.L_1^{n-1})_{\BP^n} = (\pi_2^*L_2.\pi_1^*L_1^{n-1})_{M}$$ 
$$= (H_2.H_1^{n-1}.2(H_1+H_2)^n)_{\BP^n \times \BP^n} = 2n\,\, .$$
Thus 
$$h_2 + \tau_1^{*}h_2 = p_1^{*}(p_1)_*h_2 = 2nh_1\,\, .$$
Hence $\tau_1^{*}h_1 = h_1$ and $\tau_2^*h_2 = 2nh_1 - h_2$, and therefore, the matrix representaion $M_1$ of $\tau_1^*$ is as described. In the same way, one obtains the matrix representation $M_2$ of $\tau_2^*$ as described. The matrix representaion of $f^*$ is then $M_2M_1$. The rest follows from a simple computaion of $2 \times 2$ matrices and an elementary fact that $\sqrt{n^2 -1}$ is irrational for any integer $n \ge 2$.
\end{proof}

Let $\tilde{\pi}_i : M \to M_i$ ($i=1$, $2$) be the Stein factorization of $\pi_i : M \to \BP^n$. As the Stein factorization is unique, the covering involution $\tilde{\tau}_i$ of $M_i \to \BP^n$ is in ${\rm Aut}\, (M_i)$ (not only in ${\rm Bir}\, (M_i)$) and satisfies $\tilde{\pi}_i \circ \tau_i = \tilde{\tau}_i \circ \tilde{\pi}_i$. 

\begin{lemma}\label{lem32}
$\tilde{\pi}_i$ ($i=1$, $2$) are small contractions of $M$ and $M$ admits no other contraction. Here a contraction of $M$ means a non-isomorphic surjective morphism to a normal projective variety of positive dimension 
with connected fibers. 
\end{lemma}

\begin{proof} The morphism $\tilde{\pi}_i$ is given by $|mh_i|$ for large $m$. Recall that $h_i$ are not ample. Thus $\tilde{\pi}_i$ is a contraction. As $\rho(M) = 2$, there is then no contraction other than $\tilde{\pi}_i$ ($i=1$, $2$). 

As $\rho(V) = 2$, the $i$-th projection $p_i|V : V \to \BP^n$ from $V$ ($i=1$, $2$) contracts no divisor to a subvariety of codimension $\ge 2$. Thus, the $i$-th projection $\pi_i = p_i|M : M \to \BP^n$ contracts no divisor, as $M = V \cap Q$ and $Q$ is a general very ample divisor on $V$. Hence $\tilde{\pi}_i$ is a small contraction. The fact that $\tilde{\pi}_i$ ($i=1$, $2$) are small contractions of $M$ also follows from the proof of the next Lemma \ref{lem33}. 
\end{proof}

\begin{lemma}\label{lem33} 
${\rm Bir}\, (M) = {\rm Aut}\, (M) \cdot \langle \tau_1, \tau_2 \rangle$.  
\end{lemma}

\begin{proof} As $\rho(M) = 2$, the relative Picard number $\rho(M/M_1)$ 
is $1$. By Lemma (\ref{lem31}), we have
$$\tau_1^*h_2 = -h_2 + 2nh_1\,\, .$$
Thus $\tau_1^*h_2$ is relatively anti-ample for $\overline{\tau}_1^{-1} \circ \tilde{\pi}_1 : M \to M_1$, 
while $h_2$ is relatively ample for $\tilde{\pi}_1 : M \to M_1$. 
Since $K_M = 0$, the map $\tilde{\tau}_1^{-1} \circ \tilde{\pi}_1 : M \to M_1$ is then the flop of $\tilde{\pi}_1 : X \to M_1$, given by $\tau_1$. For the same reason, the map $\tilde{\tau}_2^{-1} \circ \tilde{\pi}_2 : M \to M_2$ is the flop of $\tilde{\pi}_2 : M \to M_2$, given by $\tau_2$. 

Recall that any flopping contraction of a Calabi-Yau manifold is given by a codimension one face of $\overline{{\rm Amp}}\, (M)$ up to automorphisms of $M$ 
(\cite[Theorem 5.7]{Ka88}). As there is no codimension one face of $\overline{{\rm Amp}}\, (M)$ other than ${\mathbf R}_{\ge 0}h_i$ ($i = 1, 2$), there is then no flop of $M$ other than $\tau_i$ ($i = 1, 2$) up to ${\rm Aut}\,(M)$. Recall a fundamental result of Kawamata (\cite[Theorem 1]{Ka08}) that {\it any birational map between minimal models is decomposed into finitely many flops up to automorphisms of the target variety}. Thus any $\varphi \in {\rm Bir}\, (M)$ is decomposed into a finite sequence of flops $\tau_i$ 
and an automorphism of $M$ at the last stage. This proves the result. 
\end{proof}
Set $v_{+} := -h_1 + (n+ \sqrt{n^2-1})h_2$, $v_{-} := -h_2 + (n+ \sqrt{n^2-1})h_1$ and 
$$V := \R_{\ge 0}v_{+} + \R_{\ge 0}v_{-} \subset N^1(M)_{\R}\,\, .$$ 
Here $v_{\pm}$ are eigenvectors of $f^*$ corresponding to the eigenvalues $(2n^2 -1) \pm 2n\sqrt{n^2 -1}$. By writing the Jordan canonical form of the matrix $M_2M_1$ in Lemma \ref{lem31}, one readily observes that 
\begin{equation}\label{eq1}
\lim_{n \to \infty} \R_{>0} (f^n)^* x = \R v_{+}\,\, ,\,\, \lim_{n \to \infty} \R_{>0} (f^{-n})^* x = \R v_{-}
\end{equation}
for any $x \in \overline{{\rm Amp}}\, (M) \setminus \{0\}$. 
\begin{lemma}\label{lem34} 
$\overline{{\rm Mov}}\, (M) = V$. 
Moreover, the interior $(\overline{{\rm Mov}}\, (M))^{\circ}$ of $\overline{{\rm Mov}}\, (M)$ coincides with the ${\rm Bir}\,(M)$-orbit of the nef cone $\overline{{\rm Amp}}\, (M)$:
$$\overline{{\rm Mov}}\, (M)^{\circ} = {\rm Bir}\, (M)^*\overline{{\rm Amp}}\, (M) := \cup_{h \in {\rm Bir}\, (M)} h^*\overline{{\rm Amp}}\, (M)\,\, .$$ 
\end{lemma}

\begin{proof}
Recall that ${\rm Aut}\, (M)$ preserves $\overline{{\rm Amp}}\, (M) = \R_{\ge 0} h_1 + \R_{\ge 0}h_2$, in particular, 
$$g^*\{h_1, h_2\} = \{h_1, h_2\}$$
 if $g \in {\rm Aut}\, (M)$ and ${\rm Bir}\, (M)$ preserves $\overline{{\rm Mov}}\, (M)$. As every nef divisor on $M$ is semi-ample in our situation, we have  
$${\rm Bir}\, (M)^*\overline{{\rm Amp}}\, (M) \subset {\overline{\rm Mov}}\, (M)\,\, .$$
By using the formula (\ref{eq1}), we also find that 
$$V^{\circ} = {\rm Bir}\, (M)^*\overline{{\rm Amp}}\, (M)$$ 
and therefore 
\begin{equation}\label{eq2}
V \subset {\overline{\rm Mov}}\, (M)\,\, .
\end{equation} 
Let $d \in {\overline{\rm Mov}}\, (M)^{\circ}$. Then $d$ is represented by an effective $\R$-divisor on $M$, say $D$. By Kodaira's lemma, $D$ is big. Choose a small positive real number $\epsilon > 0$ such that 
$$(M, \epsilon D) = (M, K_M + \epsilon D)$$
is klt. As $D$ is big, we can run the minimal model program for $(M, \epsilon D)$ to make $D$ nef by \cite[Theorem 1.2]{BCHM10}. By Lemma \ref{lem33}, $M$ has no divisorial contraction and all log-flips of $(M, \epsilon D)$ are $\tau_i$ ($i=1$, $2$), as all log-flips of $(M, \epsilon D)$ are necessarily flops of $M$ by $K_M = 0$. Thus, there is $g \in {\rm Bir}\, (M)$ such that $g^*D \in \overline{{\rm Amp}}\, (M)$. 
Therefore
\begin{equation}\label{eq3}
{\overline{\rm Mov}}\, (M)^{\circ} \subset {\rm Bir}\, (M)^*\overline{{\rm Amp}}\, (M) = V^{\circ}\,\, .
\end{equation}
As both $V$ and ${\overline{\rm Mov}}\, (M)$ are closed convex cone, the inclusions (\ref{eq2}) and (\ref{eq3}) imply the result. 
\end{proof}

We are now ready to complete the proof of Theorem \ref{thm31}. 

By Lemma \ref{lem31}, we see that $d_1(f) = (2n^2 -1) + 2n\sqrt{n^2 -1} >1$ 
and $f^*|N^1(M)$ is irreducible over $\Q$. Let $D \not= 0$ be a movable divisor on $M$. Then, as the class of $D$ is rational and the both boundary rays of $\overline{{\rm Mov}}\, (M)$ is irrational by the first part of Lemma \ref{lem34}, the class of $D$ belongs to $\overline{{\rm Mov}}\, (M)^{\circ}$. Thus by the second part of Lemma \ref{lem34}, there is $g \in {\rm Bir}\, (M)$ such that $g^*D \in \overline{{\rm Amp}}\, (M)$. As remarked at the beginning of this section, every nef divisor on $M$ is semi-ample. In particular, $g^*D$ is semi-ample. Now, we can apply Theorem \ref{thm1} to conclude. 
\end{proof}

\section*{Acknowledgements} I would like to express my thanks to Professors Ekaterina Amerik, Ljudmila Kamenova and De-Qi Zhang for their interest in this work and useful discussions. I would like to express my thanks to the referee for his/her constructive suggestions for explanations.

\end{document}